\definecolor{gray9}{gray}{0.9}
\definecolor{gray8}{gray}{0.8}
\definecolor{gray7}{gray}{0.7}
\definecolor{gray6}{gray}{0.6}
\definecolor{lgr}{rgb}{0.8,0.8,0.8}
\definecolor{darkblue}{rgb}{0.0,0.0,0.3}
\newcommand{\cT}{{\mathcal T}}
\newcommand{\cS}{{\mathcal S}}
\newcommand{\Sub}{\mathbf{Sub}}
\newcommand{\Miss}{\mathbf{m}}
\newcommand{\Closure}{\mathbf{Cl}}
\newcommand{\Max}{\mathbf{Max}}
\newcommand{\Aut}{\mathbf{Aut}}
\newcommand{\Z}{\mathbb{Z}}
\newcommand{\cl}{\color{lgr}}
\DeclareMathOperator{\im}{im}
\DeclareMathOperator{\id}{id}
\theoremstyle{plain}
\newtheorem{theorem}{Theorem}[section]
\newtheorem{lemma}[theorem]{Lemma}
\theoremstyle{definition}
\newtheorem{problem}[theorem]{Problem}
\newcommand{\GAP}{\textsc{Gap}}
\newcommand{\Semigroups}{\textsc{Semigroups}}
\newcommand{\SubSemi}{\textsc{SubSemi}}
\newcommand{\SmallGroup}{\textsc{SmallGroup}}
\begin{document}
\title{Enumerating Transformation Semigroups}
\author[J. East, A. Egri-Nagy, J. D. Mitchell]{James East$^1$ and Attila Egri-Nagy$^{2}$ and James D. Mitchell$^3$}
\address{$^1$Centre for Research in Mathematics, School of Computing, Engineering and Mathematics, University of Western Sydney (Parramatta Campus), Locked Bag 1797, Penrith, NSW 2751, Australia}
\address{$^2$  Akita International University, Yuwa, Akita-City, 010-1292, Japan}
\address{$^3$ Mathematical Institute, University of St Andrews, North Haugh, St Andrews, Fife, KY16 9SS, Scotland}
\email{J.East@uws.edu.au,\ egri-nagy@aiu.ac.jp,\ jdm3@st-and.ac.uk}
\maketitle
\begin{abstract}
We describe general methods for enumerating subsemigroups of finite semigroups and techniques to improve the algorithmic efficiency of the calculations.
As a particular application we use our algorithms to enumerate all transformation semigroups up to degree 4.
Classification of these semigroups up to conjugacy, isomorphism and anti-isomorphism, by
size and rank, provides a solid base for further investigations of
transformation semigroups.
\end{abstract}

\section{Introduction}
When studying finite structures it can be helpful to generate small
examples using a computer \cite{Erne1991,Pfeiffer2004,Harrison1965,Harary1973,SmallGroupsLibrary,holt2010enumerating}.
By investigating these sample objects we can discover patterns, formulate new hypotheses, look for counterexamples, and so on.
More diverse sample sets make it easier to greater evidence for or against a conjecture.
Therefore, to maximize the usefulness of these small examples we naturally aim both to enumerate \emph{all} objects of a certain
parameter value (such as size or dimension), and to \emph{increase} the value of this parameter.

Earliest efforts to enumerate semigroups have focused on the abstract case, enumerating by \emph{order} (the \emph{size} of the semigroup).
The basic idea for enumerating by size is to find all valid multiplication tables (up to isomorphism and anti-isomorphism) of the given size \cite{monoidenum2009,For55,JW77,KRS76,Ple67,SZT94,tamura2,tamura1,Distler2014}.
Our approach here is to enumerate not by size but, rather, by
\emph{degree} of transformation representation \cite{T3enum1970,T3enum1991}.  Recall that Cayley's Theorem (for semigroups) says that any finite semigroup $S$ is isomorphic to a semigroup of functions on a finite set; the \emph{degree} of $S$ is defined to be the minimal size of such a set.  So we aim to enumerate the valid subtables inside the multiplication table of the \emph{full transformation semigroup} $\cT_n$, which consists of all \emph{transformations} of the set $\{1,\ldots,n\}$ (i.e., all functions from this set to itself); more specifically, we wish to find all such subtables (up to isomorphism and anti-isomorphism) that are not also (isomorphic to) subtables of $\cT_{n-1}$.  All finite semigroups will eventually be listed when we enumerate by size or by degree, but the order of the list is different from one method to the other.  For example, there are 52,989,400,714,478 abstract semigroups of order 9
\cite{monoidenum2009, Distler2014},
 so one could barely imagine the number of semigroups of
order 27, where $\cT_3$ would first appear when enumerating by size.  On the other hand, our results show that there are only~25 different transformation semigroups on~3 points of order~9.
Metaphorically speaking, enumeration by size and by degree go in completely different
directions and proceed at a different rate.

The complexity of the multiplication of two transformations from $\cT_n$ is linear in $n$.
However, multiplication in a semigroup defined by a multiplication table has constant complexity.
Hence, we choose multiplication tables as the main way of representing semigroups.
This decision has two consequences.
First, our algorithms fall into the class of semigroup algorithms that fully enumerate the elements.
This of course  restricts us to relatively small semigroups.
On the other hand, multiplication table algorithms are completely representation independent,
so our methods are widely applicable across different types of finite semigroups.

The article is organised as follows.
In Section \ref{sec:multab} we describe a basic multiplication table method to calculate subsemigroups generated by a subset.
In Section \ref{sec:enum} we present generic search algorithms to enumerate subsemigroups of
finite semigroups.
In Section \ref{sec:techniques} we discuss techniques for improving the
efficiency of the algorithms in Section~\ref{sec:enum}, based on more specific algebraic results.
Finally, in Section \ref{sec:fulltranssgp} we apply the developed methods for enumerating transformation semigroups acting on up to 4 points.

For the computational enumeration we used the \GAP~computer algebra system \cite{GAP4} and its \Semigroups~package~\cite{Semigroups} and developed a new package \SubSemi~for subsemigroup enumeration \cite{subsemi}. Instructions for recomputing the results can be found in the package documentation.  We now take a moment to establish some notation.

\subsection{Notation}

Let $S$ be a finite semigroup with $|S|=n\in\mathbb{N}$.
We fix an order on the semigroup elements and for convenience denote them simply by $1,\ldots,n$.
The  \emph{multiplication table}, or \emph{Cayley table} of $S$ is an $n\times n$ matrix $M_S$ with entries from
$\{1,\ldots,n\}$, such that $M_{i,j}=k$ if $ij=k$ in $S$ (we denote multiplication in $S$ by juxtaposition).
The subarray of $M_S$ with rows and columns indexed by a subset $A\subseteq\{1,\ldots,n\}$ is denoted by $M_A$.

The set of all subsemigroups of $S$ is denoted by $\Sub(S)=\big\{T\mid T\leq S
\big\}$.  We consider the empty set a semigroup, so
$\varnothing\in\Sub(S)$.  The set of maximal proper subsemigroups
of $S$ is denoted by $\Max(S)$.  For $A\subseteq S$, $\langle A\rangle$ denotes the least subsemigroup of $S$
containing $A$, the semigroup generated by $A$.

If $I$ is an ideal of $S$ then the \emph{Rees factor semigroup} $S/I$ has
elements $(S\setminus I)\cup\{0\}$, where $0$ is a new symbol that does not belong to $S$, and with multiplication $\cdot$ defined by
\[
s\cdot t = \begin{cases}
st &\text{if $s,t,st\in S\setminus I$}\\
0 &\text{otherwise.}
\end{cases}
\]
Note that if $I=\varnothing$ is the empty ideal, then $S/I\cong S^0$ is the semigroup obtained from $S$ by adjoining a zero.

A transformation $t\in\cT_n$ will often be written by simply listing the images of the points, $t=[t(1),\ldots,t(n)]$.  The group consisting of all permutations of $\{1,\ldots,n\}$ is the \emph{symmetric group}, denoted~$\cS_n$, and is the group of units of~$\cT_n$.

\section{The closure algorithm}
\label{sec:multab}

A basic question in computational semigroup theory is:  \emph{Given a subset $A$ of a semigroup $S$, what is the subsemigroup $\langle A\rangle$ generated by $A$?}  We will also write $\Closure(A)$ for $\langle A\rangle$, and refer to it as the \emph{closure} of $A$ (in $S$).
Note that on the level of the Cayley table, $\langle A\rangle$ is the minimal set $B$ (in the containment order on subsets of~$S$) such that the subarray $M_B$ contains $M_A$ and all entries from $M_B$ belong to $B$.  An algorithm for obtaining $B=\Closure(A)$ is as follows.
First we determine the set $\Miss(A)=\{x\mid x\text{ is an entry of } M_A\}\setminus A$, those products of elements of
$A$ that are missing from $A$.
We then recursively define
\[
\Closure_1(A)=A\cup\Miss(A),\qquad \Closure_{i+1}(A)=\Closure_1(\Closure_{i}(A))\quad \text{for $i\geq 1$.}
\]
Note that $\Closure(A)=\Closure_j(A)$, where $j$ is minimal such that $\Closure_j(A)=\Closure_{j+1}(A)$, or equivalently $\Miss(\Closure_j(A))=\varnothing$.

For example, consider the symmetric group $\cS_3$, with its elements ordered $1=()$, $2=(2,3)$, $3=(1,2)$, $4=(1,2,3)$, $5=(1,3,2)$, $6=(1,3)$.
The subarray $M_{\{4\}}$ contains only one entry, 5, which is different from~4, so the subarray is not closed. With the above notation, $\Miss(\{4\})=\{5\}$, $\Miss(\{4,5\})=\{1\}$ and $\Miss(\{1,4,5\})=\varnothing$, so $\Closure_1(\{4\})=\{4,5\}$ and $\Closure_2(\{4\})=\Closure_3(\{4\})=\Closure(\{4\})=\{1,4,5\}$, corresponding to the unique subgroup of order $3$ in $\cS_3$.  See Fig.~\ref{fig:S3}.

\begin{figure}[ht]
\begin{center}
\renewcommand{\arraystretch}{0.5}

\begin{tabular}{@{}c@{\;\!}|@{\;\!}c@{}c@{}c@{}c@{}c@{}c@{}}
  &1&2&3&4&5&6\\
\hline
1&1&2&3&4&5&6\rule{0pt}{2ex}\\
2&2&1&4&3&6&5\\
3&3&5&1&6&2&4\\
4&4&6&2&5&1&3\\
5&5&3&6&1&4&2\\
6&6&4&5&2&3&1\\
\end{tabular}
\qquad\qquad\qquad
\begin{tabular}{@{}c@{\;\!}|@{\;\!}c@{}c@{}c@{}c@{}c@{}c@{}}
  &\cl1&\cl2&\cl3&   4&\cl5&\cl6\\
\hline
\cl1&\cl1&\cl2&\cl3&\cl4&\cl5&\cl6\rule{0pt}{2ex}\\
\cl2&\cl2&\cl1&\cl4&\cl3&\cl6&\cl5\\
\cl3&\cl3&\cl5&\cl1&\cl6&\cl2&\cl4\\
   4&\cl4&\cl6&\cl2&   5&\cl1&\cl3\\
\cl5&\cl5&\cl3&\cl6&\cl1&\cl4&\cl2\\
\cl6&\cl6&\cl4&\cl5&\cl2&\cl3&\cl1\\
\end{tabular}
\quad
$\longrightarrow$
\quad
\begin{tabular}{@{}c@{\;\!}|@{\;\!}c@{}c@{}c@{}c@{}c@{}c@{}}
  &\cl1&\cl2&\cl3&   4&   5&\cl6\\
\hline
\cl1&\cl1&\cl2&\cl3&\cl4&\cl5&\cl6\rule{0pt}{2ex}\\
\cl2&\cl2&\cl1&\cl4&\cl3&\cl6&\cl5\\
\cl3&\cl3&\cl5&\cl1&\cl6&\cl2&\cl4\\
   4&\cl4&\cl6&\cl2&   5&   1&\cl3\\
   5&\cl5&\cl3&\cl6&   1&   4&\cl2\\
\cl6&\cl6&\cl4&\cl5&\cl2&\cl3&\cl1\\
\end{tabular}
\quad
$\longrightarrow$
\quad
\begin{tabular}{@{}c@{\;\!}|@{\;\!}c@{}c@{}c@{}c@{}c@{}c@{}}
  &   1&\cl2&\cl3&   4&   5&\cl6\\
\hline
   1&   1&\cl2&\cl3&   4&   5&\cl6\rule{0pt}{2ex}\\
\cl2&\cl2&\cl1&\cl4&\cl3&\cl6&\cl5\\
\cl3&\cl3&\cl5&\cl1&\cl6&\cl2&\cl4\\
   4&   4&\cl6&\cl2&   5&   1&\cl3\\
   5&   5&\cl3&\cl6&   1&   4&\cl2\\
\cl6&\cl6&\cl4&\cl5&\cl2&\cl3&\cl1\\
\end{tabular}
    \caption{The Cayley table of the symmetric group $\cS_3$ (left), and calculation of $\Closure(\{4\})=\{1,4,5\}$ (right).  See text for further explanation.}
    \label{fig:S3}
   \end{center}
 \end{figure}

The above recursive definition describes an algorithm for calculating the closure, but not an efficient one.
We can avoid the full calculation of the missing elements in the recursive steps.
When extending the subarray $M_A$ by a single element $i$, if $\Miss(A)$ is already calculated, then all new missing elements in $\Miss(A\cup\{i\})$ can only come from the $i$th row or the $i$th column.
So, to calculate the closure we can extend the subarray one-by-one using the elements of $\Miss(A)$ and any new missing elements encountered during the recursion.
This way each table entry is checked only once.

\section{Basic Search Algorithms for Subsemigroup Enumeration}
\label{sec:enum}

One of our primary goals is to enumerate the semigroups of a given degree, and this involves enumerating the subsemigroups of $\cT_n$.  This is of course a special instance of the following more general problem.

\begin{problem}\label{prob:enum}
For a semigroup $S$, find all of its subsemigroups:
$\Sub(S)=\left\{ T\mid T\leq S\right\}.$
\end{problem}

In this section, we discuss a number of algorithmic approaches to this
problem.  Thinking in terms of the multiplication table $M_S$, we are
looking for all subarrays $M_A$ that are also multiplication tables;
i.e., they do not contain elements not in $A$.

\subsection{Enumerating by Brute Force}
\label{sec:bruteforce}
The obvious brute-force algorithm for constructing $\Sub(S)$ proceeds by first constructing the powerset $2^S=\left\{ A\mid A\subseteq S\right\}$ and then checking each subset for closure.  For some semigroups, any method essentially reduces to the brute-force algorithm (e.g., left or right zero semigroups, where every subset is a subsemigroup), but it is inefficient in cases where only a fraction of the subsets are closed under multiplication.

\subsection{Enumerating by Minimal Generating Sets}
\label{sec:mingen}
The \emph{rank} of a semigroup is the least size of a  generating set.
The rank of a subsemigroup can be bigger than the rank of the semigroup itself.
For example, the full transformation semigroup has rank 3~\cite{ClassicalTransSemigroups2009}, but its minimal ideal (which is a left zero semigroup) has rank $n$, while its maximal proper ideal (the semigroup of all singular transformations) has rank ${n\choose2}=n(n-1)/2$ for $n\geq3$ \cite{Gomes1987,Howie1978}.
Assuming that we know the maximum rank for subsemigroups of $S$, we can check all subsets of $S$ with cardinality up to that value to see what subsemigroups they generate.
The same subsemigroup may be generated by many generating sets but the maximality guarantees that we construct all of $\Sub(S)$.
On each level $k$, we check $\binom{|S|}{k}$ many generating sets.
Therefore, the method is only feasible if the maximum value of the ranks of the subsemigroups is known to be small.
To the knowledge of the authors, the maximal rank of a subsemigroup of $\cT_n$ is not currently known, though the maximal rank of an ideal is equal to the largest of the Stirling numbers $S(n,2),\ldots,S(n,n-1)$ \cite{Howie1990}. 

What can we do if we do not know the maximum rank value?
We can keep going until no new subsemigroup is generated.
First we check all subsemigroups generated by one element, then those generated by two; we subtract the former set from the latter to obtain the set of rank-2 subsemigroups.
We then continue up to $m$ where the set of rank-$m$ subsemigroups is empty.
Unfortunately this last step is wasted, unless rank$(S)=S$, e.g.~left zero
semigroups, in which case, this is just the brute-force search.

\subsection{Enumerating by Minimal Extensions}
\label{sec:minext}

A \emph{minimal extension} of a subsemigroup $T\leq S$ is a subsemigroup $\langle T\cup\{u\}\rangle$, where $u\in S\setminus T$.
We simply add a new element to $T$ and calculate the closure.
If we recursively calculate minimal extensions, then we obtain all subsemigroups of $S$ containing $T$.
So this represents a solution to a natural generalisation of Problem \ref{prob:enum}, that of calculating the interval $[T,S]=\left\{U\mid T\leq U\leq S\right\}$ in the lattice $\Sub(S)$.

This algorithm is a graph search.
The nodes are the subsemigroups containing $T$, and there is a directed edge labelled $u$ from $V$ to $V'$ if $V'=\langle V\cup\{u\}\rangle$.
In general, there may be many incoming edges to a subsemigroup.
The efficiency of the algorithm  comes from the fact that the search tree is cut when the search encounters a subsemigroup already known, simply by making no further extensions.
The details are provided in Algorithm \ref{alg:minclosure}.
Depending on how \textsf{exts}, the storage for extensions, behaves under the \texttt{Store}/\texttt{Retrieve} operations we get different search strategies. If \textsf{exts} is a stack, then the algorithm performs  a depth-first search, and if it is a queue, then a breadth-first search is performed.
A full subsemigroup enumeration can be done by starting the algorithm with parameters $T=\varnothing$, the semigroup is simply $S$, and  $X=S$.
This is simply extending the empty set by all elements of $S$ recursively.

When using the breadth-first search strategy, the generating set is minimal, so Algorithm \ref{alg:minclosure} can easily be modified to enumerate minimal generating sets.
A little consideration shows that this is a more efficient version of the minimal generating sets algorithm (Section \ref{sec:mingen}), but it does not escape checking generating sets one bigger than the maximal rank.

\begin{algorithm}[ht]
\SetKwInOut{Input}{input}
\SetKwInOut{Output}{output}
\SetKwData{subs}{subs}
\SetKwData{gens}{gens}
\SetKwData{exts}{exts}
\SetKwFunction{Store}{Store}
\SetKwFunction{Retrieve}{Retrieve}
\SetKwFunction{MinimalExtensions}{SubSemigroupsByMinimalExtensions}
\Input{$S$, the semigroup\\ $T\leq S$, a subsemigroup to be extended\\$X\subseteq S$, a set of elements to extend with}
\Output{all $T'\subseteq S$ such that $T'=\langle T\cup Y\rangle$ for some $Y\subseteq X$}
\SetKwInOut{Name}{\MinimalExtensions($T$,$S$,$X$)}
\BlankLine
\Name{}
\subs $\leftarrow \{T\}$\\
\exts $\leftarrow \varnothing$\\
      \For{$s\in$ $(S\setminus T)\cap X$}{
      \Store(\exts, $T\cup\{s\}$)
    }
\While{$|\exts|>0$}{
 $T'\leftarrow\langle\Retrieve(\exts)\rangle$\\
 \If{$T'\notin$ \subs}{
   \subs$\leftarrow$ \subs$\cup\ \{T'\}$\\
      \For{$s\in$ $(S\setminus T')\cap X$}{
      \Store(\exts, $T'\cup\{s\}$)
      }
  }
}
\Return \subs
\caption{Finding subsemigroups by minimal extensions.}
\label{alg:minclosure}
\end{algorithm}

\subsection{Enumerating by Maximal Subsemigroups}
Assuming that we have calculated the maximal subsemigroups of $S$, we can parallelize subsemigroup enumeration by enumerating subsemigroups of the maximal subsemigroups and merging the results, using the obvious fact that
\[
\Sub(S)=\{S\} \cup  \bigcup_{T\in \Max(S)}\Sub(T).
\]
A description is given in \cite{MaxSubSemi} of the possible maximal subsemigroups of an arbitrary finite semigroup $S$, i.e.~a maximal subsemigroup of $S$ is one of the possibilities described in \cite{MaxSubSemi}.
Algorithms, based on these properties, are implemented in the \Semigroups~package \cite{Semigroups,AlgMaxSemi}.
The sets of subsemigroups of the maximal subsemigroups do overlap in general, so the same subsemigroup may get enumerated many times, and merging is a nontrivial step.
Note that recursively iterating the maximal subsemigroups is a variant of the depth-first search algorithm, moving from large subsemigroups to small in contrast to the Minimal Extensions method discussed in Section \ref{sec:mingen}.

\section{Advanced Algorithmic Techniques}
\label{sec:techniques}

Since we are dealing with well-studied algebraic structures, there are many mathematical results we may exploit in order to improve the efficiency of any basic subsemigroup enumeration algorithm.  In this section, we outline a number of improvements on the algorithms described in the previous section, the most powerful of which involves using an ideal to parallelize the enumeration (see Sections \ref{sec:idealparallel} and \ref{sec:lowertorso}).

\subsection{Equivalent Generators}
\label{sec:equivgen}
We define an equivalence relation $\equiv$ on $S$ by
$$ s\equiv t \Longleftrightarrow \langle s \rangle= \langle t  \rangle.$$
For instance, $[ 2, 3, 1, 1 ]\equiv[ 3, 1, 2, 2 ]$ in $\cT_4$, both transformations generating $\big\{ [ 2, 3, 1, 1 ], [ 3, 1, 2, 2 ], [ 1, 2, 3, 3 ]\big\}$.
Note that if the $\equiv$-class of $s\in S$ is nontrivial, then $\langle s\rangle$ is a cyclic group.
Note also that if $s\equiv t$, then $\langle U\cup\{s\}\rangle=\langle U\cup\{t\}\rangle$ for any $U\subseteq S$, so when using the Minimal Extensions enumeration method (Section~\ref{sec:mingen}), we only need to calculate extensions with respect to $\equiv$-class representatives.

\subsection{Exploiting Symmetries}

If we have information about the symmetries $S$ might possess, then we can accelerate any subsemigroup enumeration algorithm.  For example, if $S$ is a monoid with group of units $G$, then $T^g=\left\{ g^{-1}tg \mid t\in T \right\}$ is a subsemigroup of $S$ for any $T\in\Sub(S)$ and $g\in G$.  More generally, if $\theta$ is an automorphism of $S$, then $T\theta=\left\{ t\theta \mid t\in T \right\}\in\Sub(S)$ for all $T\in\Sub(S)$.  There is an algorithm for computing the automorphism group $\Aut(S)$ of a finite semigroup $S$ \cite{computingautomorphisms2010}.  So, during subsemigroup enumeration, whenever a subsemigroup $T\leq S$ is found, we may quickly find the subsemigroups $T\theta$.
If $H\leq\Aut(S)$ is a group of automorphisms of $S$, then we write $\Sub_H(S)$ for a set of automorphism class representatives of the subsemigroups of $S$; so for any $T\in\Sub(S)$, there is a unique subsemigroup $U\in\Sub_H(S)$ such that $T=U\theta$ for some $\theta\in H$.

Moreover, when extending a subsemigroup $T$ by elements from $S\setminus T$ one by one (Section \ref{sec:minext}), we can cut the search tree further. By taking the normalizer of $T$ in $S$, i.e.\ the stabilizer of $T$ under the conjugation, we know that the conjugacy classes of $T\cup\{x\}$ will be of the form $T\cup\{x_i\}$, where $x_i$ is an element of the orbit of $x$ under conjugation, thus we only need to extend with one element of the orbit.
Therefore, the search algorithm has to visit  only the orbit representatives of $S\setminus T$ under the normalizer.

\subsection{Parallel Enumeration in Ideals and Rees Quotients}
\label{sec:idealparallel}

In general, an ideal $I$ of $S$ divides a subsemigroup $T\leq S$ into two parts: a subsemigroup contained in the ideal, $L=T\cap I$, and a subset outside the ideal, $U=T\cap (S\setminus I)$. We call $L$ and $U$ the \emph{lower} and \emph{upper torso of $T$ with respect to $I$}, respectively (see Fig.~\ref{fig:torsos}).  Note that $U$ or $L$ may be empty.
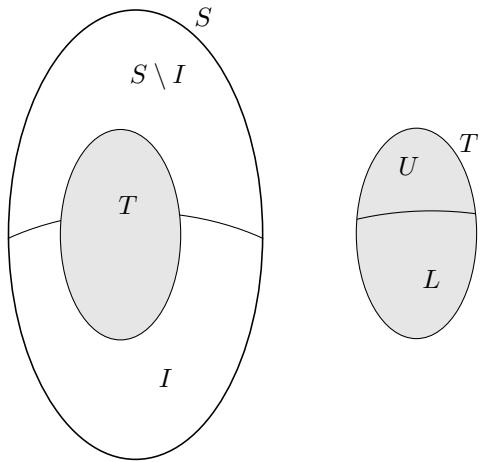
\begin{figure}[ht]
\begin{center}
\def\S{(0,0) ellipse (1.7cm and 3cm)}
\def\I{(0,-1.7) ellipse (3cm and 2cm)}
\def\T{(-.2,0)ellipse (.8cm and 1.4cm)}
\begin{tikzpicture}
\begin{scope}
\clip\S;
\draw\I;
\draw[very thick]\S;
\end{scope}
\fill [color=gray9] \T;
\draw \T;
\draw (0.3,2.1) node (SI) {$S\setminus I$};
\draw (0.9,2.9) node (S) {$S$};
\draw (0.4,-1.9) node (I) {$I$};
\draw (-0.1,.4) node (T) {$T$};
\end{tikzpicture}
\begin{tikzpicture}
\begin{scope}
\clip\T;
\draw[very thick]\S;
\fill [color=gray9] \T;
\draw\I;
\end{scope}
\draw \T;
\draw (.5,1.2) node (T) {$T$};
\draw (0,-.6) node (L) {$L$};
\draw (-0.3,0.9) node (U) {$U$};
\draw (-2,-2.9) node (invisible) {};
\end{tikzpicture}
\caption{If $S$ has an ideal $I$, then a subsemigroup $T\leq S$ is partitioned into two parts by the ideal, the upper torso $U=T\cap (S\setminus I)$ and the lower torso $L=T\cap I$.}
\label{fig:torsos}
\end{center}
\end{figure}
The upper torso $U$ need not be a subsemigroup of $T$ in general, but $U$ may be turned into a semigroup by adjoining a zero element in an obvious way, giving precisely the Rees quotient of $T$ by the ideal $T\cap I$.
The next lemma shows that subsemigroup enumeration can be done in parallel in $I$ and $S/I$, though a combination step is still required.

\begin{lemma}
\label{lem:torso}
If $I$ is an ideal of $S$, then $$\Sub(S)=\big\{\langle (U\setminus\{0\})\cup L \rangle\mid U\in \Sub(S/I),\ L\in\Sub(I)\big\}.$$
\end{lemma}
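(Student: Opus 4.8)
The plan is to prove set equality by establishing both inclusions, using the characterization that $A \leq S$ if and only if $\Set(M_A) \subseteq A$, together with the structure of the Rees quotient $S/I$.

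First I would show that every set of the form $\langle (U \setminus \{0\}) \cup L \rangle$ (with $U \in \Sub(S/I)$ and $L \in \Sub(I)$) is a subsemigroup of $S$. This direction is nearly immediate: $(U \setminus \{0\}) \cup L$ is a subset of $S$, since $U \setminus \{0\} \subseteq S \setminus I$ and $L \subseteq I \subseteq S$, and the closure $\langle \cdot \rangle$ of any subset of $S$ is by definition a subsemigroup of $S$. So the right-hand side is contained in $\Sub(S)$. The slight subtlety is that $0$ is a symbol not belonging to $S$, so one must first remove it before taking the closure in $S$; this is exactly why the formula reads $U \setminus \{0\}$ rather than $U$.

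The reverse inclusion is the substantive direction. Given an arbitrary $T \in \Sub(S)$, I would define its lower and upper torsos $L = T \cap I$ and $U' = T \cap (S \setminus I)$ as in the discussion preceding the lemma. The key claim is that $L \in \Sub(I)$ and that $U := U' \cup \{0\} \in \Sub(S/I)$, and then that $T = \langle (U \setminus \{0\}) \cup L \rangle$. For the first point, $L = T \cap I$ is an intersection of two subsemigroups of $S$ (since an ideal is in particular a subsemigroup), hence a subsemigroup contained in $I$. For the second point, I would verify that $U = U' \cup \{0\}$ is closed under the Rees multiplication $\cdot$: given $s, t \in U'$, their product $st$ in $S$ lies in $T$ (as $T$ is a subsemigroup), so either $st \in S \setminus I$, in which case $st \in T \cap (S \setminus I) = U'$, or $st \in I$, in which case $s \cdot t = 0 \in U$ by definition of the Rees quotient; products involving $0$ give $0$. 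Thus $U \in \Sub(S/I)$.

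The final and most delicate step is to verify the equality $T = \langle (U \setminus \{0\}) \cup L \rangle$, and this is where I expect the main obstacle to lie. Note $(U \setminus \{0\}) \cup L = U' \cup L = (T \cap (S \setminus I)) \cup (T \cap I) = T$ as sets, since $\{S \setminus I, I\}$ partitions $S$. Because $T$ is already a subsemigroup, its closure equals itself, so $\langle T \rangle = T$ and the equality follows. The only thing to watch is the degenerate case $L = \varnothing$ or $U' = \varnothing$: if $U' = \varnothing$ then $U = \{0\}$, which is indeed a subsemigroup of $S/I$, and $(U \setminus \{0\}) \cup L = L = T$; if $L = \varnothing$ then $T \subseteq S \setminus I$ and again the set-level identity holds. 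Collecting these, every $T \in \Sub(S)$ appears on the right-hand side, completing the reverse inclusion and hence the proof. The real content of the lemma is thus not the equality of sets, which is essentially bookkeeping, but the observation that it licenses computing $\Sub(I)$ and $\Sub(S/I)$ independently and then recombining, as emphasized in the surrounding text.
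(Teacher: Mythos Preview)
Your proof is correct and follows essentially the same approach as the paper's: for the nontrivial inclusion, take $T\in\Sub(S)$, set $L=T\cap I$ and $U=(T\cap(S\setminus I))\cup\{0\}$, and observe that $(U\setminus\{0\})\cup L=T$ is already closed, while the other inclusion is trivial since closures are subsemigroups. Your version is more thorough in that you explicitly verify $L\in\Sub(I)$ and $U\in\Sub(S/I)$ (which the paper leaves implicit), but the underlying argument is the same.
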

\begin{proof}
Let $T\in\Sub(S)$ and put $L=T\cap I$ and $U=T\cap(S\setminus I)\cup\{0\}$. Then $T=(U\setminus \{0\})\cup L=\langle (U\setminus\{0\})\cup L\rangle$. This establishes the forward set containment. The other is trivial.
\end{proof}

Note that a subsemigroup $U\leq S/I$ need not contain the zero element.  The method suggested by Lemma~\ref{lem:torso} requires calculating $|\Sub(S/I)|\cdot|\Sub(I)|$ set unions and subsequent closures.
However, this inefficient calculation is avoidable by using the Lower Torso Enumeration technique, as we now describe.

\subsection{Lower Torso Enumeration}
\label{sec:lowertorso}

Suppose we have enumerated $\Sub(S/I)$ for some ideal $I$ of the semigroup~$S$.  Then $\left\{ U\setminus\{0\} \mid U\in\Sub(S/I) \right\}$ is precisely the set of all upper torsos.
The next task is to find all the matching lower torsos for an upper torso.  That is, for each upper torso $U$, we must find all subsemigroups $L\leq I$ such that $U\cup L\leq S$.
The method described in Section \ref{sec:idealparallel} involved enumerating $\Sub(S/I)$ and $\Sub(I)$ and checking what the combinations generated.
We can do better.
The idea is that an upper torso $U$ acts on the elements of the ideal~$I$ by multiplication, so if we do a minimal extension search (Section \ref{sec:minext}) the extensions will often be `large jumps' (see Fig.~\ref{fig:lowertorsoenum}).
We can use Algorithm \ref{alg:minclosure} starting from $U$ and extending only by the elements from the ideal.
In practice, for the full transformation semigroups, this is a very useful trick.  The main general advantage of the Lower Torso Enumeration method is that no merge is required.

\begin{figure}[ht]
\def\S{(0,0) ellipse (5cm and 3cm)}
\def\I{(0,-2) ellipse (7cm and 3.5cm)}
\def\T{(-1,1)ellipse (.8cm and 1.4cm)}
\begin{tikzpicture}
\fill [color=gray9] \T;
\draw \T;
\draw (2.3,2.1) node (SI) {$S\setminus I$};
\draw (3.9,2.2) node (S) {$S$};
\draw (4,0) node (I) {$I$};
\draw (-.2,2.2) node (T) {$T$};
\draw (-1.1,.4) node (L) {$L$};
\draw (-1.1,2) node (U) {$U$};
\draw (-2.1,-1) node (T) {$x_1$};
\draw (2.1,0) node (T) {$x_2$};
\begin{scope}
\clip\S;
\draw\I;
\draw[very thick]\S;
\end{scope}
\begin{pgfonlayer}{background layer}
\draw [black] plot [smooth cycle] coordinates {(0,1.2)(-1.,1.8)(-4,-1)(-3,-2) (2.4,-1.6)(3,0)};
\fill [gray7] plot [smooth cycle] coordinates {(0,1.2)(-1.,1.8)(-4,-1)(-3,-2) (2.4,-1.6)(3,0)};
\draw [black] plot [smooth cycle] coordinates {(0,.5)(-1,2)(-2.6,-1) (-.4,-1)};
\fill [gray8] plot [smooth cycle] coordinates {(0,.5)(-1,2)(-2.6,-1) (-.4,-1)};
\end{pgfonlayer}
\end{tikzpicture}
\caption{Calculating lower torsos for $T$ by minimal extensions, first extending with $x_1$ then by $x_2$, elements from $I$. The idea is that often $|T| \ll  |\langle T\cup\{x_1\}\rangle| \ll |\langle T\cup\{x_1,x_2\}\rangle|$. The jumps in size are due to the upper torso acting on the elements of the ideal.}
\label{fig:lowertorsoenum}
\end{figure}
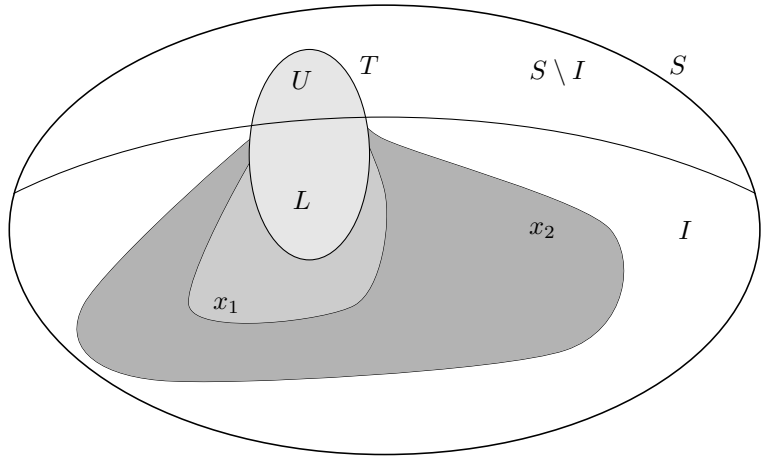

\section{Enumerating transformation semigroups of degree 2, 3 and 4}
\label{sec:fulltranssgp}

In order to enumerate all semigroups of degree $n$, we construct
all subsemigroups of the full transformation semigroup $\cT_n$. We use the ideal
structure to make the enumeration more efficient by making the calculation parallel according to the techniques discussed in Sections \ref{sec:idealparallel} and \ref{sec:lowertorso}.

Recall that the \emph{rank} of a transformation $t$ is $|\!\!\:\im(t)|$.
The ideal of $\cT_n$ containing all elements of rank at most $i$ is denoted by $K_{n,i}$.
The ideal structure of $\cT_n$ is a linear order of nested ideals:
$$\varnothing\subset K_{n,1}\subset K_{n,2}\subset\ldots\subset K_{n,n-1}\subset K_{n,n}=\cT_n.$$
The maximal proper ideal $K_{n,n-1}=\cT_n\setminus\cS_n$ is also called the \emph{singular transformation semigroup} of degree~$n$, and consists of all transformations but the permutations.

It is well known that $\Aut(\cT_n)$ is isomorphic to $\cS_n$, with
every automorphism induced by conjugating the elements of $\cT_n$ by a
permutation \cite{IN1972,SU1937}.
As such, we are primarily interested in calculating $\Sub_{\cS_n}(\cT_n)$, a set of conjugacy class representatives of the subsemigroups of $\cT_n$.  Note, however, that a pair of subsemigroups may be isomorphic but not conjugate. A separate backtrack search algorithm is needed to construct semigroup embeddings in general, then in particular we can find a set of isomorphism class representatives \cite{repcon}.

\subsection{Subsemigroups of $\cT_2$, the pen and paper case}
The semigroup $\cT_2$ has only four elements and consequently the brute-force search space size is only $2^4=16$.
It is an easy exercise to find all of its subsemigroups.
We order the elements lexicographically, 1=[1,1], 2=[1,2], 3=[2,1], 4=[2,2], and list the closed subarrays:
\input{T2subs.tab}
Using these we can draw the subsemigroup lattice (see Fig.~\ref{fig:T2subs}).  Note that the subsemigroups $\{1\}$ and $\{2\}$ are isomorphic but not conjugate.
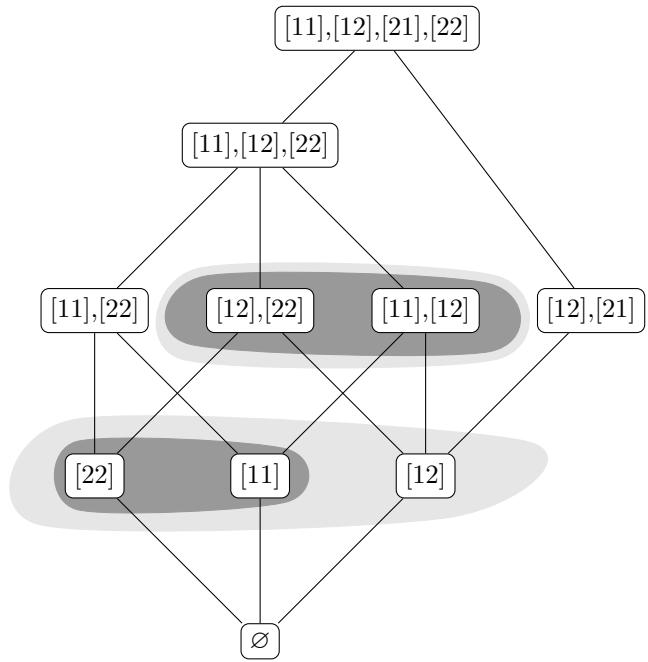
\begin{figure}[ht]
\begin{tikzpicture}
[align=center,node distance=2.2cm]
\tikzstyle{plain}=[fill=white,rounded corners=3pt, draw]

\draw node [plain] (1234) {[11],[12],[21],[22]};
\draw node [plain,below left of=1234] (124) {[11],[12],[22]};
\draw node [plain,below of=124] (24) {[12],[22]};
\draw node [plain,left of=24] (14) {[11],[22]};
\draw node [plain,right of=24] (12) {[11],[12]};
\draw node [plain,right of=12] (23) {[12],[21]};

\draw node [plain,below of=24] (1) {[11]};
\draw node [plain,left of=1] (4) {[22]};
\draw node [plain,right of=1] (2) {[12]};
\draw node [plain,below of=1] (empty) {$\varnothing$};

\draw  (1234) -- (124);
\draw (124) -- (12);
\draw (124) -- (24);
\draw (124) -- (14);
\draw  (1234) -- (23);
\path (23) edge (2);
\draw (1) -- (empty);
\draw (2) -- (empty);
\draw (4) -- (empty);
\draw (14) -- (4);
\draw (14) -- (1);
\draw (12) -- (2);
\draw (12) -- (1);
\draw (24) -- (4);
\draw (24) -- (2);
\begin{pgfonlayer}{background layer}
\filldraw [gray9] plot [smooth cycle] coordinates {(1.6,-4.4)(-2.6,-4.4) (-2.4,-3.2) (1.6,-3.3)};
\filldraw [gray6] plot [smooth cycle] coordinates {(1.5,-4.3)(-2.5,-4.2) (-2.3,-3.3) (1.5,-3.4)};
\filldraw [gray9] plot [smooth cycle] coordinates {(-4,-5.2) (2,-5.5) (1,-6.5)(-4.5,-6.6)};
\filldraw [gray6] plot [smooth cycle] coordinates {(-4,-5.5) (-1.2,-5.6) (-1.2,-6.3)(-4,-6.4)};

\end{pgfonlayer}

\end{tikzpicture}
\caption{The subsemigroup lattice of $\cT_2$. The horizontal levels correspond to classes of subsemigroups of the same size.  Dark (resp., light) grey blobs indicate nontrivial conjugacy (resp., isomorphism) classes.}
\label{fig:T2subs}
\end{figure}

The obvious subdivision of $\Sub(\cT_2)$ is according to the sizes of the subsemigroups (as in Fig.~\ref{fig:T2subs}).
It turns out that another way of partitioning the elements will also be important for higher degrees.
One big chunk of the subsemigroup lattice of $\cT_n$ is formed by the subsemigroups of the singular part, $\Sub(K_{n,n-1})$, and this has an order-isomorphic copy when we adjoin the identity of $\cT_n$ to each subsemigroup of $K_{n,n-1}$, denoted by $\Sub(K_{n,n-1})^\#$.
The remaining part is the set of subsemigroups that contain nontrivial permutations.
Since we have no problems with fully calculating and displaying $\Sub(\cT_2)$, this division has no significance, but can be visualized easily (see Fig.~\ref{fig:T2subsAlt}).
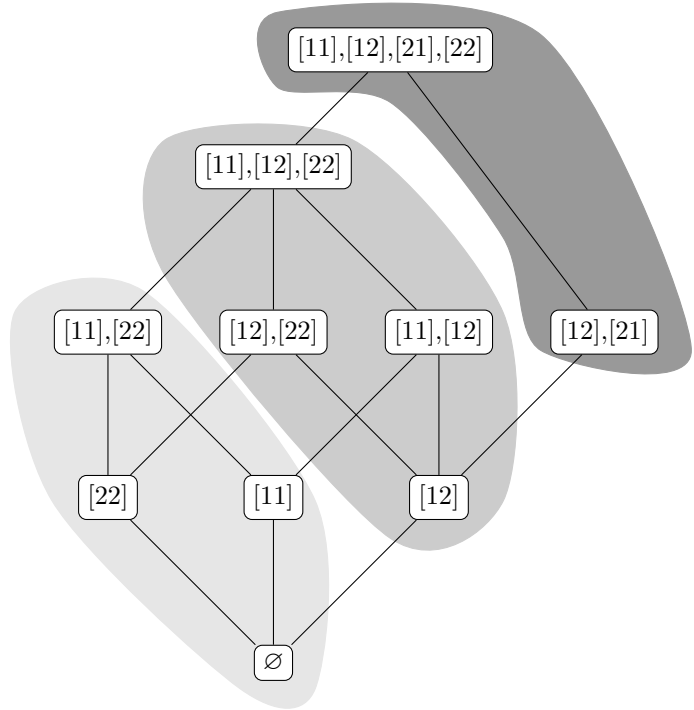
\begin{figure}
\begin{tikzpicture}
[align=center,node distance=2.2cm]
\tikzstyle{plain}=[fill=white,rounded corners=3pt, draw]

\draw node [plain] (1234) {[11],[12],[21],[22]};
\draw node [plain,below left of=1234] (124) {[11],[12],[22]};
\draw node [plain,below of=124] (24) {[12],[22]};
\draw node [plain,left of=24] (14) {[11],[22]};
\draw node [plain,right of=24] (12) {[11],[12]};
\draw node [plain,right of=12] (23) {[12],[21]};

\draw node [plain,below of=24] (1) {[11]};
\draw node [plain,left of=1] (4) {[22]};
\draw node [plain,right of=1] (2) {[12]};
\draw node [plain,below of=1] (empty) {$\varnothing$};

\draw  (1234) -- (124);
\draw (124) -- (12);
\draw (124) -- (24);
\draw (124) -- (14);
\draw  (1234) -- (23);
\path (23) edge (2);
\draw (1) -- (empty);
\draw (2) -- (empty);
\draw (4) -- (empty);
\draw (14) -- (4);
\draw (14) -- (1);
\draw (12) -- (2);
\draw (12) -- (1);
\draw (24) -- (4);
\draw (24) -- (2);
\begin{pgfonlayer}{background layer}
\filldraw [gray6] plot [smooth cycle] coordinates {(-1.5,-0.5)(-1.5,0.5)(2,0.2)(4,-4)(2,-4)(1.5,-2.5)(0,-0.7)};
\filldraw [gray8] plot [smooth cycle] coordinates {(-0.5,-1.2)(-3,-1.2)(-3,-3)(0,-6.5)(1.5,-6)(1.5,-3.5)};
\filldraw [gray9] plot [smooth cycle] coordinates {(-3.3,-3.2)(-5,-3.5)(-4.5,-6)(-2,-8.5)(-1,-8.5)(-1,-6)};

\end{pgfonlayer}

\end{tikzpicture}
\caption{The subsemigroup lattice of $\cT_2$. The subsemigroups of the singular part are indicated by the lowest light grey blob. The middle group is an order-isomorphic copy of the latter, with the identity of $\cT_2$ adjoined to each subsemigroup. The upper dark grey part consists of the subsemigroups containing nontrivial permutations. The size of the dark group appears to get smaller relative to the singular part for higher degrees.}
\label{fig:T2subsAlt}
\end{figure}

\subsection{Subsemigroups of $\cT_3$, the limits of brute-force}
The brute-force search space size for $\cT_3$ is $2^{3^3}=\text{134
  217 728}$, or approximately 134.2 million. Previous computational
investigations further restricted the search to the singular part
bringing it down to $\approx$2.1 million
\cite{T3enum1970, T3enum1991}.
In contrast, using the Minimal Extension method (Section~\ref{sec:minext}) together with the Equivalent Generators trick (Section \ref{sec:equivgen}), only 4344 subsets need to be checked to enumerate the 283 conjugacy classes. For all the 1299 subsemigroups, 25041 checks were required. This demonstrated efficiency of the graph search algorithm highlights the benefits of our approach.
The frequency distribution of the sizes of subsemigroups of $\Sub(\cT_3)$, as well as conjugacy and isomorphism classes, is as follows:
\begin{center}
\small
\renewcommand{\tabcolsep}{1pt}
\renewcommand{\arraystretch}{1}
\begin{tabular}{|l|c|c|c|c|c|c|c|c|c|c|c|c|c|c|c|c|c|c|c|c|c|c|c|c|c|c|c|c||c|}
\hline
Order&0&1&2&3& 4 & 5 & 6 & 7 & 8 & 9 & 10 & 11 & 12 & 13 & 14 & 15 & 16 & 17 & 18 & 19 & 20 & 21 & 22 & 23 & 24 & 25 & 26 & 27 & Total\\
\hline
\#subsemigroups&1&  10& 45& 86& 136& 192& 206& 186& 144& 109& 63& 51& 30& 9& 3& 9& 6& 6&&&& 1& 1& 3& 1&&& 1  &1299\\
\hline
\#conjugacy classes&1& \cellcolor{gray9}3& \cellcolor{gray9}10& \cellcolor{gray9}19& \cellcolor{gray9}28& \cellcolor{gray9}38&42&38&30&25&14&12&7&3&1&3&2&2& & &  &1&1&1&1& &  &1&283\\
\hline
\#isomorphism classes&1& \cellcolor{gray9}1& \cellcolor{gray9}5& \cellcolor{gray9}15& \cellcolor{gray9}24& \cellcolor{gray9}37&42&38&30&25&14&12&7&3&1&3&2&2& & &  &1&1&1&1& &  &1&267\\
\hline
with anti-isomorphism&1& 1& 5& \cellcolor{gray9}14& \cellcolor{gray9}23&37&42&38&30&25&14&12&7&3&1&3&2&2& & &  &1&1&1&1& &  &1&265\\
\hline
\end{tabular}
\normalsize
\end{center}
It is easy to see why $\cT_3$ has no subsemigroups of size 25 and 26: the biggest maximal subsemigroup is of order~24, since subsemigroups of order $\geq21$ correspond to those of the form $K_{3,2}\cup H$ where $H\leq \cS_3$. On the other hand, we have no such explanation for the missing orders
18, 19 and 20.
Observe also that isomorphism classes only break up into multiple conjugacy classes for low cardinalities (indicated by grey cells in the table). There is only one example of anti-isomorphism in $\cT_3$ (and a copy of it with identity included).

By applying the Minimal Generating sets method (Section \ref{sec:mingen}), we obtain the following rank value distribution of conjugacy
classes of $\Sub(\cT_3)$:
\begin{center}
\small
\renewcommand{\tabcolsep}{1pt}
\renewcommand{\arraystretch}{1}
\begin{tabular}{|c|c|c|c|c|c|c|c||c|}
\hline
rank&0&1&2&3& 4 & 5 & 6 & Total\\
\hline
\#subsemigroups &1 & 26& 201& 460& 410& 171& 30 &1299\\
\hline
\#conjugacy classes &1&  7& 46& 101& 85& 36& 7 &283\\
\hline
\#isomorphism classes &1& 4 & 39 & 96 &84 &36 &7 &267\\
\hline
\end{tabular}
\normalsize
\end{center}
In particular, the maximum rank of a subsemigroup of $\cT_3$ is $6$.  For example, the six transformations [1,1,3], $\id$, [2,1,1], [2,1,2], [2,2,3], [3,3,3] generate an 8-element semigroup (adding two more constants), but this semigroup cannot be generated by a smaller set.

\subsection{Subsemigroups of $\cT_4$, the need for parallelization}
Since $|\cT_4|=4^4=256$, the brute-force search space is already enormous; $2^{256}$ is a 78-digit number (by contrast, it is currently estimated that there are approximately $10^{80}$ atoms in the observable universe).
Therefore, the practical calculation of  $\Sub_{\cS_4}(\cT_4)$ requires the strategy  of cutting $K_{4,3}$ into two parts for doing the search for subsemigroups in parallel.
The exact algorithm for enumerating $\Sub_{\cS_4}(\cT_4)$ is described in the following six steps.
\begin{enumerate}
\item Calculate $\Sub_{\cS_4}(K_{4,3}/K_{4,2})$ by the minimal extension algorithm (Section \ref{sec:minext}). There are \mbox{10 002 390} conjugacy classes, slightly more than 10 million.
\item \emph{In parallel}, enumerate all lower torsos for all the upper torsos derived from  $\Sub_{\cS_4}(K_{4,3}/K_{4,2})$ with the Lower Torso limited enumeration method (Section \ref{sec:lowertorso}). This gives $\Sub_{\cS_4}(K_{4,3})$, with  65 997 018 conjugacy classes. The calculation is truly parallel since the upper torsos always differ, so there is no need for merging the elements. (The subsemigroups from $\Sub_{\cS_4}(K_{4,2})\subseteq \Sub_{\cS_4}(K_{4,3})$ are obtained in this step when extending the empty upper torso.)
\item To get the order-isomorphic copy of $\Sub_{\cS_4}(K_{4,3})$, we simply adjoin the identity to all subsemigroups: $\Sub_{\cS_4}(K_{4,3})^\#=\left\{S\cup\{\id\}\mid S\in \Sub_{\cS_4}(K_{4,3}) \right\}$.
\item To extend $\Sub_{\cS_4}(K_{4,3})$ to $\Sub_{\cS_4}(\cT_4)$, note that $\cT_4\setminus K_{4,3}=\cS_4$ is a sub(semi)group of $\cT_4$.  So we enumerate $\Sub_{\cS_4}(\cS_4)$ instead of $\Sub_{\cS_4}(\cT_4/K_{4,3})$, using the Minimal Extensions method. These are all closed upper torsos. This is a much easier subgroup enumeration problem.
\item \emph{In parallel}, find all lower torsos for all nontrivial subgroups in $\Sub_{\cS_4}(\cS_4)$. Let $P$ be the set of subsemigroups of $\cT_4$ with nontrivial permutations (including the subgroups as well). This part corresponds to the dark blob on Fig.~\ref{fig:T2subsAlt}.
Though the search space is the set of subsets (or in this case the subsemigroups) of $K_{4,3}$, the search is surprisingly quick.
This is due to the fact that a subgroup of $\cS_4$ acts on the singular part $K_{4,3}$, making each minimal extension into a huge jump, meaning that adding an extra generator yields a relatively large number of new elements.
In other words, we take each (conjugacy class representative) subgroup $1\neq G\leq \cS_4$ and look for subsemigroups of $K_{4,3}$ closed under the products with $G$.
Even a single nontrivial permutation makes the closure relatively big.
For instance, there are only 71147 lower torsos in $K_{4,3}$ for $\mathbb{Z}_2=\langle(1,2)\rangle$.
The total number of elements in $P$ is 75741.
\item Finally, we have $\Sub(\cT_4)=\Sub(K_{4,3})\cup \Sub(K_{4,3})^\# \cup P$, the set of subsemigroups of the singular part, its order-isomorphic copy with the identity adjoined to each subsemigroup, and the subsemigroups containing nontrivial permutations.
The total number of subsemigroups up to conjugacy is $|\Sub_{\cS_4}(\cT_4)|= 132 069 776$ and $|\Sub(\cT_4)|=3 161 965 550$. Note that the ratio of these two numbers is $\approx23.94$, almost the order of $\cS_4$, while $\Sub(\cS_4)/\Sub_{\cS_4}(\cS_4)=\frac{30}{11}\approx 2.72$.
\end{enumerate}

We note that if $M$ is an arbitrary finite monoid with group of units $G$, then there is a similar decomposition $\Sub(M)=\Sub(M\setminus G)\cup \Sub(M\setminus G)^\# \cup P$, where $P$ is defined analogously.

The size distribution of $\Sub(\cT_4)$ shows an interesting pattern (see Fig.~\ref{fig:SubT4SizeDistrib}).
For subgroups of a group, only the divisors of its order can have nonzero frequency values.
If we considered the size distribution of all subsets, the maximal (middle) binomial coefficient would define the peak value.
For $\Sub(\cT_4)$ the situation is far more involved.
The numbers are big and they give the impression of continuous change with several peaks.
The authors do not currently have an explanation for the shape of the distribution; a systematic study of the size classes is needed.

\begin{figure}[ht]
\includegraphics[width=\textwidth]{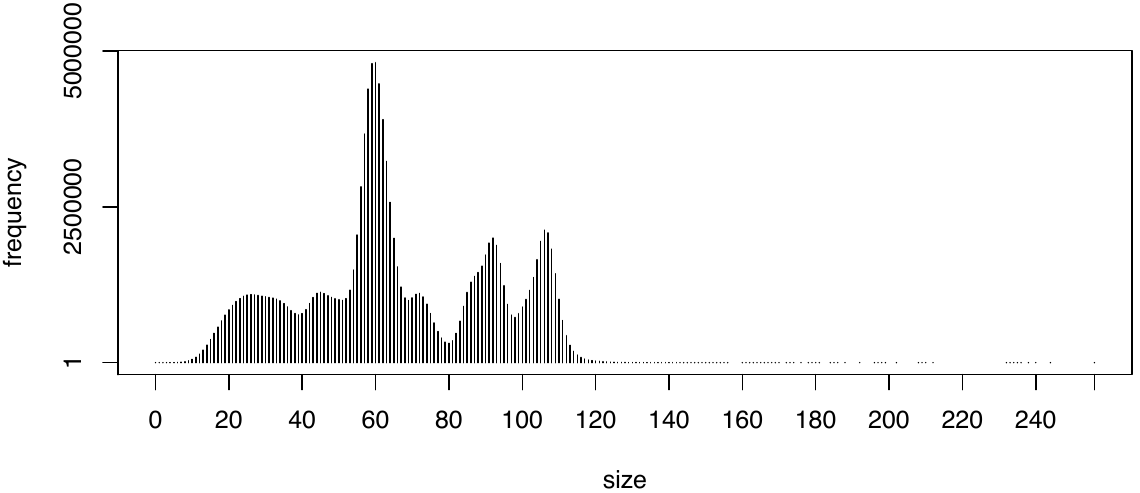}
\caption{The size distribution of $\Sub_{\cS_4}(\cT_4)$. The maximum is at size 60. There are 58 different size values with no subsemigroup (no corresponding dot in the figure); the values are 157, 158, 159, 171, 175, 177, 182, 183, 187, 189, 190, 191, 193, 194, 195, 200, 201, 203, 204, 205, 206, 207, 211, 213, 214, 215, 216, 217, 218, 219, 220, 221, 222, 223, 224, 225, 226, 227, 228, 229, 230, 231, 237, 239, 241, 242, 243, 245, 246, 247, 248, 249, 250, 251, 252, 253, 254, and 255.}
\label{fig:SubT4SizeDistrib}
\end{figure}

\subsection{Nilpotency}
Recall that a semigroup $S$ is \emph{nilpotent} if it has a zero element $0$ and $S^k=\{0\}$ for some $k\in\mathbb{N}$.
It is $k$-nilpotent if $k$ is the minimal such number.
(The empty semigroup is not nilpotent as it does not have a zero.)
To decide $k$-nilpotency algorithmically, in general we do not need to calculate the power~$S^k$.
We can take a random $k$-tuple of semigroup elements, evaluate it as a product and assume that this value is the zero element.
If we find any other $k$-tuple evaluating to a different value, then the semigroup is not $k$-nilpotent.
The worst case is when $S$ is indeed $k$-nilpotent, we end up checking all $k$-tuples.

It turns out that that there are only 4 nilpotent transformation semigroups on 3 points up to conjugacy.
The trivial monoid is 1-nilpotent and it can be realized by three different conjugacy classes: by the identity transformation, by a constant map, and by a conjugate of $[1,1,3]$.
The only $2$-nilpotent conjugacy class has the representative $\{[1,1,1],[1,1,2]\}$.

There are only 23 nilpotent subsemigroups of $\cT_4$ up to conjugacy; 5 of them are 1-nilpotent, 7 are 2-nilpotent and 11 are 3-nilpotent.
The biggest 3-nilpotent subsemigroup has 6 elements:
$\{[1,1,1,1]$, $[1,1,1,2]$, $[1,1,1,3]$, $[1,1,2,1]$, $[1,1,2,2]$, $[1,1,2,3]\}.$
Note also that the fraction of nilpotent conjugacy classes among all conjugacy classes of subsemigroups of $\cT_n$ ($n=1,2,3,4$) is $1$, $0.28$, $0.0141$, $1.74\times10^{-7}$.  In other words, at least for the first four values of $n$ (which may not be a large enough sample), nilpotent semigroups appear to become exceedingly rare.  This is in sharp contrast to the situation when enumerating semigroups by \emph{order}, which yields almost exclusively $3$-nilpotent semigroups as the order increases \cite{Distler2012}.  It is an intriguing question as to whether the fraction of nilpotents continues to decrease when enumerating by degree.  It would be very curious indeed if the two methods of ``slicing up'' the (infinite) set of finite semigroups (by order or by degree) led to the two seemingly contradictory (and ultimately meaningless, as we are simply decomposing a countably infinite set into two infinite subsets) intuitions that ``almost all semigroups are 3-nilpotent'' and ``almost all semigroups are non-nilpotent''.

\subsection{Symmetries}
Automorphism groups are of fundamental interest for any mathematical structure,
and for transformation semigroups we have another kind of symmetries
corresponding to relabelling points, the normalizer group.
For most semigroups in $\Sub_{\cS_4}(\cT_4)$ these groups are trivial
(Table \ref{tab:autnorm}).
\begin{table}
\begin{tabular}{|r|c|c|}
\hline
\#conjugacy classes & normalizer group & (order,index) \\
\hline
131435432 & 1 & (1,1) \\
 618518 & $\Z_2$ & (2,1) \\
  13652 & $\Z_2\times\Z_2$ & (4,2) \\
    952 & $\cS_3$ & (6,1) \\
    728 & $\Z_3$ & (3,1) \\
    464 & $D_4$ & (8,3) \\
     29 & $\cS_4$ & (24,12) \\
\hline
\end{tabular}
\vskip3pt
\begin{tabular}{|r|c|c|}
\hline
\#conjugacy classes & automorphism group & (order,index) \\
\hline
131286736 & 1 & (1,1) \\
 748946 & $\Z_2$ & (2,1) \\
29138 & $\Z_2\times\Z_2$ & (4,2) \\
 1296 & $D_4$ & (8,3) \\
 1144 & $\cS_3$ & (6,1) \\
969 & $\Z_2\times\Z_2\times \Z_2$ & (8,5) \\
717 & $\Z_3$ & (3,1) \\
296 & $D_6$ & (12,4) \\
182 & $Z_2\times D_4$ & (16,11) \\
 58 & $Z_2\times\Z_2\times \cS_3$ & (24,14) \\
 48 & $\cS_3\times\cS_3$ & (36,10) \\
 47 & $\cS_4$ & (24,12) \\
 44 & $\Z_2\times \cS_4$ & (48,48) \\
 29 & $\Z_2\times\Z_2\times \Z_2\times\Z_2$ & (16,14) \\
 28 & $D_4\times\cS_3$ & (48,38) \\
 22 & $\Z_2\times\cS_3\times\cS_3$ & (72,46) \\
 12 & $(\Z_2\times\Z_2\times \Z_2\times\Z_2)\rtimes\Z_2$ & (32,27) \\
8 & $\cS_4\times\cS_3$ & (144,183) \\
8 & $(\cS_3\times\cS_3)\rtimes\Z_2$ & (72,40) \\
8 & $D_4\times \cS_4$ & (192,1472) \\
8 & $\Z_2\times \cS_3\times \cS_4$ & (288,1028) \\
6 & $\Z_2\times ((\cS_3\times \cS_3)\rtimes \Z_2)$ & (144,186) \\
6 & $\Z_2\times\Z_2\times \cS_4$ & (96,226) \\
4 & $((\cS_3\times \cS_3)\rtimes \Z_2)\times \cS_3$ & (432,741) \\
4 & $(\Z_3\times\Z_3)\rtimes \Z_2$ & (18,4) \\
4 & $\Z_2\times\Z_2\times D_4$ & (32,46) \\
2 & $((\cS_3\times \cS_3)\rtimes \Z_2)\times \cS_4$ & (1728,47847) \\
2 & $\Z_2\times ((\Z_3\times\Z_3)\rtimes \Z_2)$ & (36,13) \\
2 & $\Z_2\times\Z_2\times \Z_2 \times \cS_3$ & (48,51) \\
1 & $\Z_2\times D_4\times \cS_3$ & (96,209) \\
\hline
\end{tabular}
\caption{Normalizer and automorphism groups of $\Sub_{\cS_4}(\cT_4)$, with
  \SmallGroup~library identification \cite{SmallGroupsLibrary}. }
\label{tab:autnorm}
\end{table}
\section{Summary and Conclusion}

We enumerated and classified all transformation semigroups up to degree 4.
The methods developed here, with more concentrated effort and computational power,  may be able to enumerate $\Sub(\cT_5)$ or at least the subsemigroups of some of its ideals/Rees quotients.
However, a better usage of the results would be to investigate the possibility of a more constructive theory of all transformation semigroups.
A comprehensive computational survey of more general diagram semigroups is also underway \cite{diagsgps}.
\begin{table}[ht]
\renewcommand{\arraystretch}{1}
\begin{tabular}{|c|r|r|r|r|}
\hline
 & \#subsemigroups & \#conjugacy classes & \#isomorphism classes & with anti-isomorphism\\
\hline
$\cT_0$ & 1  & 1 & 1 & 1\\
\hline
$\cT_1$ & 2  & 2 & 2 & 2\\
\hline
$\cT_2$ & 10  & 8& 7 & 7\\
\hline
$\cT_3$ & 1 299 & 283 & 267 & 265\\
\hline
$\cT_4$ & 3 161 965 550 & 132 069 776& 131 852 491 & 131 851 215\\
\hline
\end{tabular}
\caption{Number of subsemigroups of full transformation semigroups.}
\end{table}

\begin{table}[ht]
\renewcommand{\arraystretch}{1}
\begin{tabular}{|l|r|r|r|r|}
\hline
 & $\cT_1$ & $\cT_2$ & $\cT_3$ & $\cT_4$ \\
\hline
\#nilpotent & 1  & 2 & 4 & 23 \\
\hline
\#commutative & 1  & 4 & 18& 158\\
\hline
\#band & 1  & 5 & 41 & 1 503 \\
\hline
\#regular & 1  & 7 & 116 & 33 285\\
\hline
\#subsemigroups & 1  & 7 & 282 & 132 069 776\\
\hline
\end{tabular}
\caption{Number conjugacy classes that contain bands, commutative, and regular semigroups. (The empty semigroup is not counted.)}
\end{table}

\subsection*{Acknowledgements}
This work was partially supported by the NeCTAR Research Cloud, an
initiative of the Australian Government's Super Science scheme and the
Education Investment Fund; and by the EU project BIOMICS (contract number CNECT-ICT-318202).

\bibliography{../compsemi.bib}
\bibliographystyle{plain}

\end{document}